\newtheorem{theo}{Theorem}
\theoremstyle{definition}
\newtheorem{defn}{Definition}
\theoremstyle{remark}
\newtheorem{remark}{Remark}
\begin{document}

  \title{Forbidden detour number on virtual knot}

  \author{Shun Yoshiike}
  \address{Nihon University Buzan Junior $\&$ Senior High School.  5-40-10 Otsuka, Bunkyo-ku, Tokyo 112-0012, Japan}

  \author{Kazuhiro Ichihara}
  \address{College of Humanities and Sciences, Nihon University. 3-25-40 Sakurajosui, Setagaya-ku, Tokyo 156-8550, Japan}


  \subjclass[2010]{57M25}
  \keywords{virtual knot, forbidden move, forbidden number}

\begin{abstract}
We show that the forbidden detour move, essentially introduced by Kanenobu and Nelson, is an unknotting operation for virtual knots. 
Then we define the forbidden detour number of a virtual knot to be the minimal number of forbidden detour moves necessary to transform a diagram of the virtual knot into the trivial knot diagram. 
Some upper and lower bounds on the forbidden detour number are given in terms of the minimal number of real crossings or the coefficients of the affine index polynomial of the virtual knot. 
\end{abstract}

\maketitle

\section{Introduction}
As a generalization of (classical) knots in the $3$-space, Kauffman introduced \textit{virtual knots} in \cite{KF}. 
Since then various studies have been made.
For example, relations of virtual knots and Gauss diagrams were studied by Goussarov, Polyak, and Viro in \cite{GPV}. 
In their research, a kind of local move on virtual knots was introduced, which they call the \textit{forbidden move}. 
Then it was shown by Kanenobu \cite{K} and Nelson \cite{Nelson} independently that for any diagram $D$ of a virtual knot,  there exists a finite sequence of Reidemeister moves, virtual Reidemeister moves and forbidden moves that takes $D$ to the trivial knot diagram, i.e., the forbidden move is an unknotting operation for virtual knots. 

In the studies of forbidden moves in \cite{K}, Kanenobu introduced and used several moves for virtual knot diagrams. 
The two of them, called $F_2$-move and $F'_{2}$-move, which are essentially equivalent, played a key role in his arguments. 
Actually, they were also considered and used by Nelson in \cite{Nelson}. 
Later, the $F'_{2}$-move is treated by Crans, Mellor, and Ganzell in \cite{CMG}, which they called the \textit{forbidden detour move}. 
See Figure~\ref{FandFd}. 

\begin{figure}[H]
  {\unitlength=1cm
  \begin{picture}(10,2.5)
\put(1,.5){\includegraphics[scale=.11] {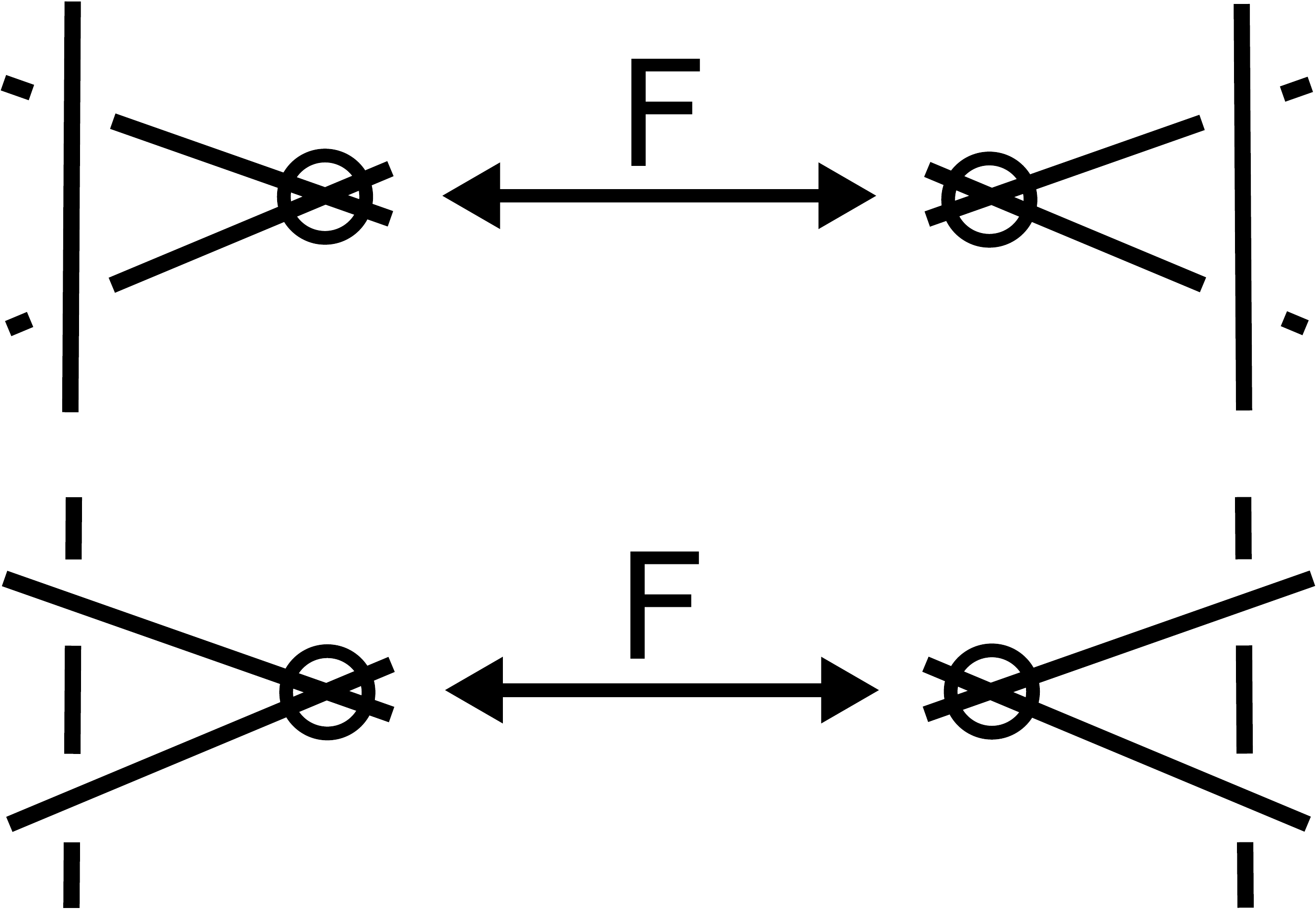} }
\put(5.1,.7){\includegraphics[scale=.14] {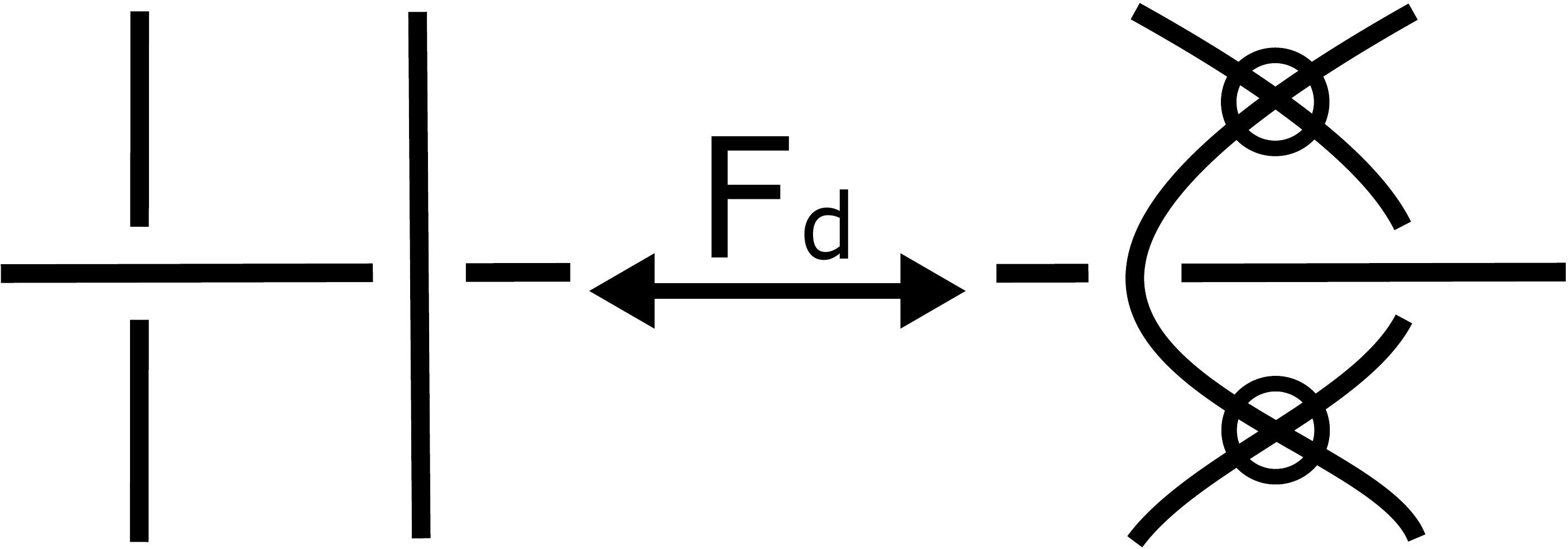} }
\end{picture}}
  \caption{Forbidden moves $F$ and forbidden detour move $F_d$}
  \label{FandFd}
\end{figure}

We here pick up that move, and obtain the following. 

\begin{theo}\label{thm1}
Let $D$ be a virtual knot diagram of a virtual knot. 
Then, $D$ can be transformed to the trivial knot diagram by using Reidemeister moves, virtual Reidemeister moves, and forbidden detour moves. 
Moreover, if $D$ has $c$ real crossings, 
then the number of forbidden detour moves is at most 
$(c-1)(2c^{2}+11c-3)/24$ if $c$ is odd and 
$c(2c^{2}+9c-14)/24$ if $c$ is even.
\end{theo}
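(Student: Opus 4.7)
The plan is to address the two claims of the theorem in sequence.

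\textbf{Unknotting claim.} For the first assertion, I would not give a new argument from scratch. The forbidden detour move $F_d$ is essentially the $F'_2$-move of Kanenobu \cite{K} and Nelson \cite{Nelson}, and their proofs that forbidden moves unknot any virtual knot in fact realize each forbidden move itself as a short combination of Reidemeister moves, virtual Reidemeister moves, and $F'_2$-moves. Thus the first sentence of the theorem follows directly from their work by reinterpreting every appearance of $F'_2$ as $F_d$.

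\textbf{Counting bound.} For the quantitative part, I would give an explicit unknotting procedure and tally its forbidden detour moves. The strategy is inductive on the number $c$ of real crossings. Fixing an orientation and a base point on $D$, I would read off its labeled Gauss word of length $2c$; using $F_d$-moves together with virtual Reidemeister moves to absorb spurious virtual crossings, I would bring the two occurrences of a chosen label next to each other in the word, so that the corresponding arc becomes a kink whose real crossing is removed by a Reidemeister~I move. This reduces $c$ by one (or by two, if one processes a matched pair together), and one then recurses on the resulting simpler diagram.

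The key technical estimate to establish is that the $k$-th isolation step costs at most a quadratic (in $k$) number of $F_d$-moves. Heuristically, each $F_d$-move can advance a chosen symbol past only a bounded number of adjacent symbols in the Gauss word, and up to $2k$ neighbors may need to be traversed; the dependence is quadratic rather than linear because moving past a single neighbor may itself require several $F_d$-moves, depending on strand orientations and the relative order of over- and under-passes. Summing the per-step bound over $k$ from $1$ to $c$ then produces a cubic polynomial in $c$, and the split between even and odd $c$ in the stated formulas reflects whether the induction terminates at $c=0$ or $c=1$; both base cases require zero $F_d$-moves but shift the final closed-form sum by a constant.

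The main obstacle is the precise bookkeeping: isolating the correct quadratic per-step cost, verifying that the virtual crossings introduced as by-products of each $F_d$-move can always be cleared by virtual Reidemeister moves alone (so they do not force additional forbidden detour moves downstream), confirming that no extra real crossings are created along the way, and finally evaluating the resulting triple sum exactly to recover the numerators $(c-1)(2c^{2}+11c-3)/24$ in the odd case and $c(2c^{2}+9c-14)/24$ in the even case.
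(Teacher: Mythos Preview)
Your outline has the right overall shape (remove arrows one at a time from the Gauss diagram, with a roughly quadratic cost per removal, then sum), and this is exactly what the paper does. But two points need correction.

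\textbf{The citation for the unknotting claim is in the wrong direction.} Kanenobu and Nelson show that the $F'_2$-move is a \emph{consequence} of the ordinary forbidden moves together with Reidemeister moves, not the other way around. Their arguments therefore do not, by themselves, tell you that $F_d$ alone (with generalized Reidemeister moves) unknots every virtual knot. In the paper, the unknotting claim is not obtained by citation at all: it falls out of the same explicit Gauss-diagram reduction that yields the bound.

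\textbf{The mechanism for the quadratic per-step cost is misidentified.} On Gauss diagrams, a single $F_d$-move swaps an arrow \emph{head} with an adjacent arrow \emph{tail}; it cannot swap two adjacent heads or two adjacent tails. So your heuristic ``moving past a single neighbor may itself require several $F_d$-moves'' is not what happens --- a head slides past a tail in exactly one move, and past another head in zero moves (it is blocked). The paper's argument exploits this asymmetry directly: pick an arrow $A$; on the shorter arc between its endpoints there sit $a$ foreign heads and $b$ foreign tails with $a+b\le c-1$. First push each of the $a$ foreign heads out through the tail endpoint of $A$ (at most $b+1$ head--tail swaps each, so $\le a(b+1)$ moves in total); now only tails remain on that arc, so the head of $A$ can slide past them ($b$ moves) and $A$ is removed by $R_1$. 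The cost is at most $a+b+ab\le (c-1)+\lfloor (c-1)^2/4\rfloor$ by the AM--GM inequality, and summing this over $c=1,\dots,n$ (splitting according to the parity of $n$ because of the floor) gives the stated cubic polynomials. Your plan would likely converge to this once you analyze which transpositions a single $F_d$ actually performs, but as written it does not yet contain the key step.
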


\begin{remark}
We note that the $F_2$-move in \cite{K} (depicted in Figure~\ref{F2}), which is equivalent to the forbidden detour move, 
can be regarded as a variation of the delta move on (classical) knots, which was introduced by Matveev in \cite{Matveev} and by Murakami and Nakanishi in \cite{MurakamiNakanishi}, independently. 
They showed that the delta move is an unknotting operation for classical knots, but it is known that it is not an unknotting operation for virtual knots. 
See \cite[Theorem 1.6]{SatohTaniguchi} for example. 
\end{remark}

\begin{figure}[H]
  {\unitlength=1cm
  \begin{picture}(10,2)
\put(2.8,.5){\includegraphics[scale=.08] {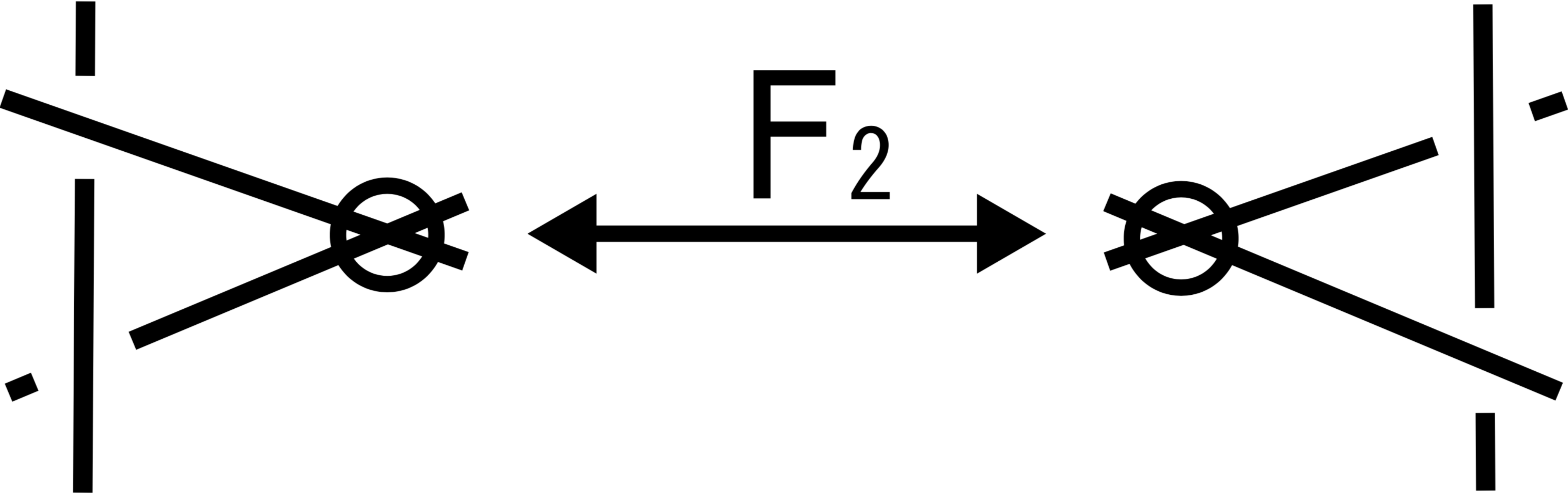} }
\end{picture}}
  \caption{$F_2$ move}
  \label{F2}
\end{figure}


In virtue of this result, we introduce the following notion. 

\begin{defn}
Let $K$ be a virtual knot.
The \textit{forbidden detour number} $F_{d}(K)$ of $K$ is defined as the minimal number of forbidden detour moves necessary to transform a virtual knot diagram of $K$ into the trivial knot diagram.
\end{defn}

We next consider  lower bounds on the forbidden detour numbers of virtual knots.
To obtain lower bounds, a variation of an invariant, called the affine index polynomial, under a forbidden detour move, plays a key role. In fact, we have the following.

\begin{theo}\label{LBFd}
Let $K$ be a virtual knot, and $P_K$ denote the affine index polynomial of $K$. Suppose that $P_{K}$ is expressed as $(t-1) \sum_{n \in \mathbb{Z}} a_{n}t^{n}$. Then, we have the following.
\[
F_{d}(K) \geq \frac{\sum_{n \in \mathbb{Z}}| a_{n}|}{2}
\]
\end{theo}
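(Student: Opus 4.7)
The plan is to show that a single forbidden detour move changes the quantity $\sum_{n}|a_n|$ by at most $2$. Combined with the fact that the trivial knot has $P_K = 0$, so that all $a_n = 0$, this immediately yields the stated lower bound.

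First, I would recall the definition of the affine index polynomial. To each real crossing $c$ of an oriented virtual knot diagram $D$ one assigns an integer weight $W_D(c)$ computed from an integer labeling of the arcs (equivalently, from the Gauss diagram), and sets
\[
P_K(t) \;=\; \sum_{c} \operatorname{sgn}(c)\bigl(t^{W_D(c)} - 1\bigr),
\]
the sum taken over the real crossings of $D$. Since $P_K(1) = 0$, the polynomial is divisible by $(t-1)$, which justifies the expression $P_K = (t-1)\sum_{n} a_n t^n$ in the statement.

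Second, I would analyze the effect of a single $F_d$-move on $P_K$ by a local computation. Let $D$ and $D'$ be two diagrams related by one forbidden detour move, and fix a consistent arc labeling outside the disk of the move. Outside this disk the real crossings and their weights are unchanged, so they contribute identically to $P_K$ and $P_{K'}$. Inside the disk, only the two real crossings depicted in Figure~\ref{FandFd} are affected. Using the input arc labels $\alpha,\beta,\gamma$ entering the disk, one computes the weights $W_D(c_i)$ and $W_{D'}(c_i')$ of those crossings directly. The key technical claim I would verify case by case is that the resulting difference takes the form
\[
P_K(t) - P_{K'}(t) \;=\; \varepsilon_1 (t-1)t^{a} + \varepsilon_2 (t-1)t^{b}
\]
for some integers $a,b$ and signs $\varepsilon_1,\varepsilon_2 \in \{-1,0,+1\}$, depending only on the labels entering the move. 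Dividing both sides by $(t-1)$, the coefficient sequences $(a_n)$ for $K$ and $(a_n')$ for $K'$ differ by a vector of $\ell^1$-norm at most $2$.

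Third, by the triangle inequality, $\bigl|\sum_{n}|a_n| - \sum_{n}|a_n'|\bigr| \leq 2$ whenever $D$ and $D'$ are related by a single $F_d$-move. Since Reidemeister and virtual Reidemeister moves leave $P_K$ invariant, and the trivial knot diagram has $a_n = 0$ for all $n$, the existence of an unknotting sequence of $N$ forbidden detour moves (guaranteed by Theorem~\ref{thm1}) forces $\sum_{n}|a_n| \leq 2N$, whence $F_d(K) \geq \sum_{n}|a_n|/2$.

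The main obstacle is the explicit verification in step two. It requires carefully tracing how the arc labels around the disk of the $F_d$-move translate into the weights $W_D(c_i)$ at the two affected real crossings, and confirming that the signed sum of the four local contributions collapses to precisely the two-term form above. This is a finite case check over the possible strand orientations, but must be done meticulously to ensure no residual term of the form $\pm(t-1)t^{c}$ survives.
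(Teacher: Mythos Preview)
Your proposal is correct and follows essentially the same approach as the paper: bound the change in $\sum_n |a_n|$ under a single $F_d$-move by $2$, then telescope along an unknotting sequence. The paper isolates your step two as a separate result (Theorem~\ref{AIP}), proving via Gauss diagrams the slightly sharper form $P_K - P_{K'} = (t-1)(\pm t^{\ell} \mp t^{m})$ with \emph{opposite} signs, but this extra precision is not needed for the lower bound, and your weaker claim with $\varepsilon_i \in \{-1,0,+1\}$ already suffices.
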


In the following, our terminology about virtual knot and Gauss diagram etc follows from those in  \cite{CMG}. 


\section{Forbidden detour number}

A \textit{virtual knot} is defined as an equivalent class of virtual knot diagrams under the Reidemeister moves, virtual Reidemeister moves. 
Also, virtual knots correspond bijectively to the equivalent classes of Gauss diagrams under moves reinterpreted Reidemeister moves. 
That is, (classical) Reidemeister moves can modify the virtual knot diagrams, but do not change the virtual knot represented by the diagrams. 

On the other hand, the forbidden move and forbidden detour move can change the virtual knots by modifying Gauss diagrams. 
In fact, as claimed in \cite[Section 2]{CMG}, the forbidden detour move gives the effect on Gauss diagrams of switching the head of one arrow with the tail of an adjacent arrow. 
See Figure~\ref{FdonGD}. 

\begin{figure}[H]
  {\unitlength=1cm
  \begin{picture}(10,2.5)
\put(2,0.5){\includegraphics[scale=.23] {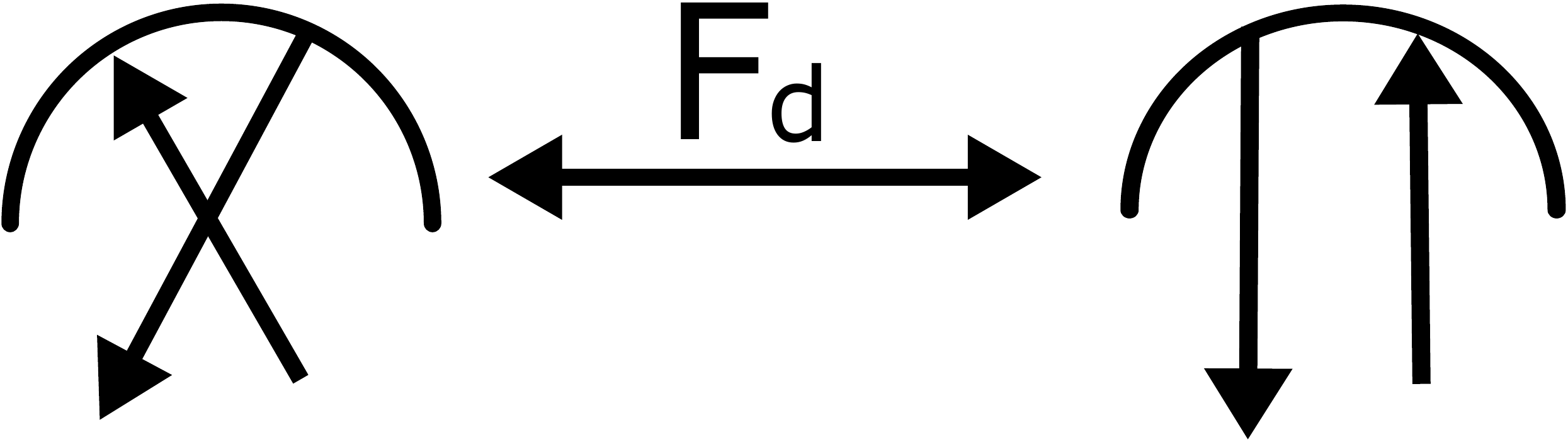} }
\end{picture}}
  \caption{The effect of an $F_{d}$-move on Gauss diagrams}
  \label{FdonGD}
\end{figure}

In the following, we call the move on Gauss diagrams corresponding a forbidden detour move also a a forbidden detour move on the Gauss diagrams. 


\begin{proof}[Proof of Theorem~\ref{thm1}]
Let $D$ be a virtual knot diagram $D$ with $c$ real crossings of a virtual knot, and $G$ a Gauss diagram associated to $D$. 
We consider an arrow $A$ of $G$, and assume that $a$ arrow-heads and $b$ arrow-tails exist on one side of the external circle of $G$ divided by the end points of $A$. 
We can suppose that and $a+b$ is smaller than or equal to $c-1$.   

\begin{figure}[H]
  {\unitlength=1cm
  \begin{picture}(10,3)
\put(4,0.5){\includegraphics[scale=.18] {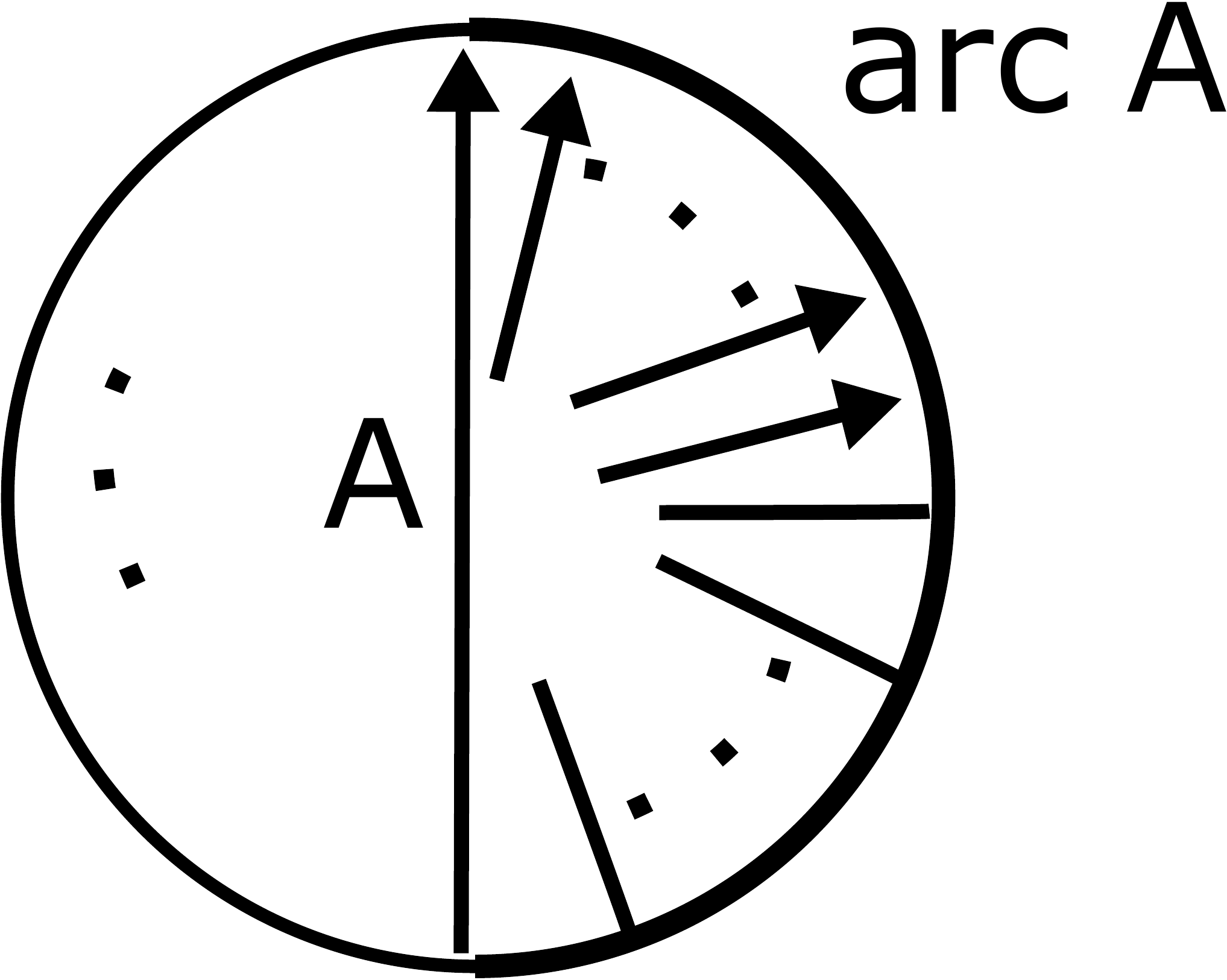} }
\end{picture}}
  \caption{Gauss diagram $G$ and arrow $A$}
  \label{GandA}
\end{figure}

Let us remove $A$ by using forbidden detour moves. 
First, we focus on all arrow-heads sandwiched between the end points of $A$ as shown Figure~\ref{SeqMoves}. 
We use forbidden detour moves at most $b+1$ times to sweep an arrow-head to outside of the considered part of the circle. We repeat this procedure $a$ times until no arrow-heads exist on that part. 
Second, we remove $A$ by using an $R_{1}$-move and forbidden detour moves $b$ times. 
Therefore, the number of forbidden detour moves to remove $A$ is at most $a(b+1)+b = a+b+ab$. 
\begin{figure}[H]
  {\unitlength=1cm
  \begin{picture}(10,7)
\put(-0.5,4.5){\includegraphics[scale=.18] {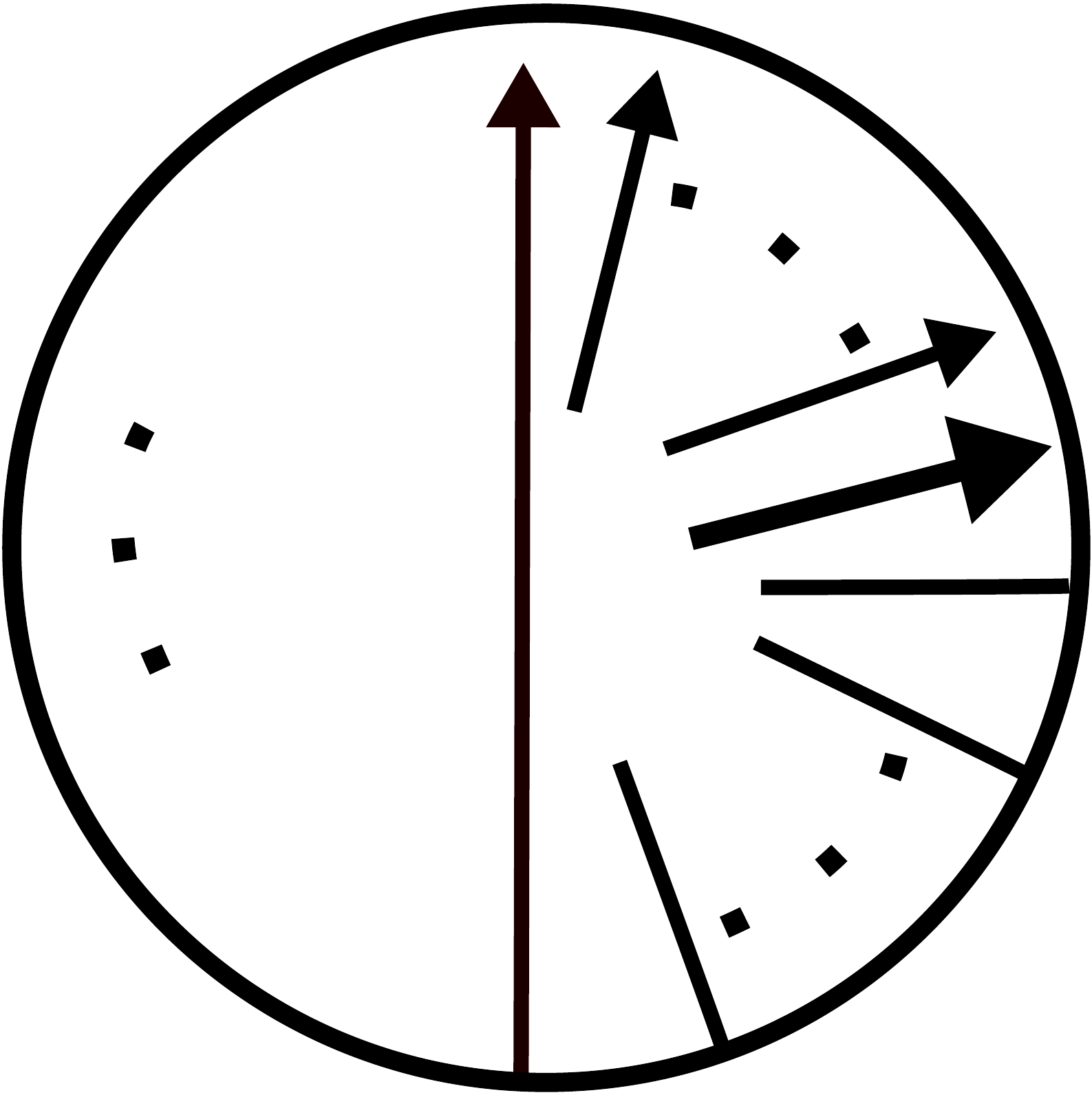} }
\put(2.7,5.5){\includegraphics[scale=.12] {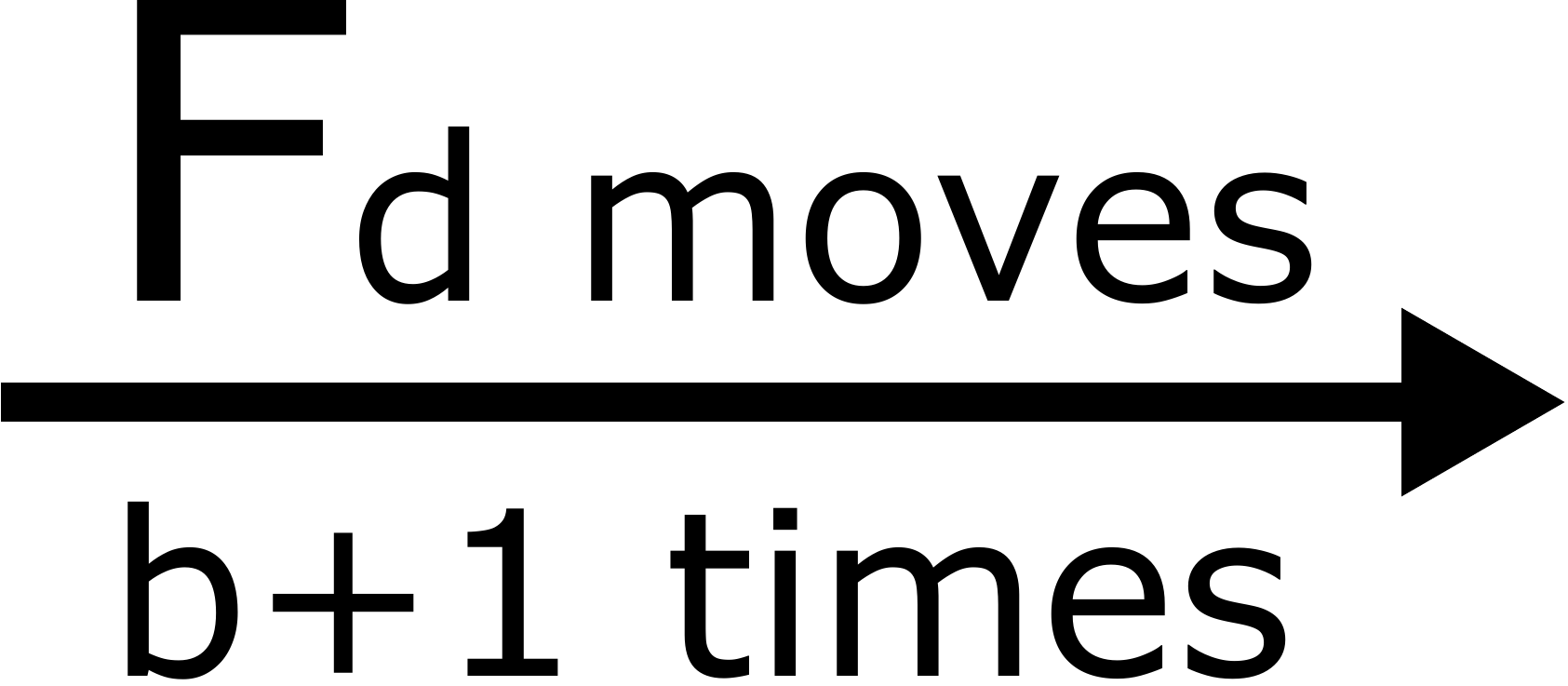} }
\put(5,4.5){\includegraphics[scale=.18] {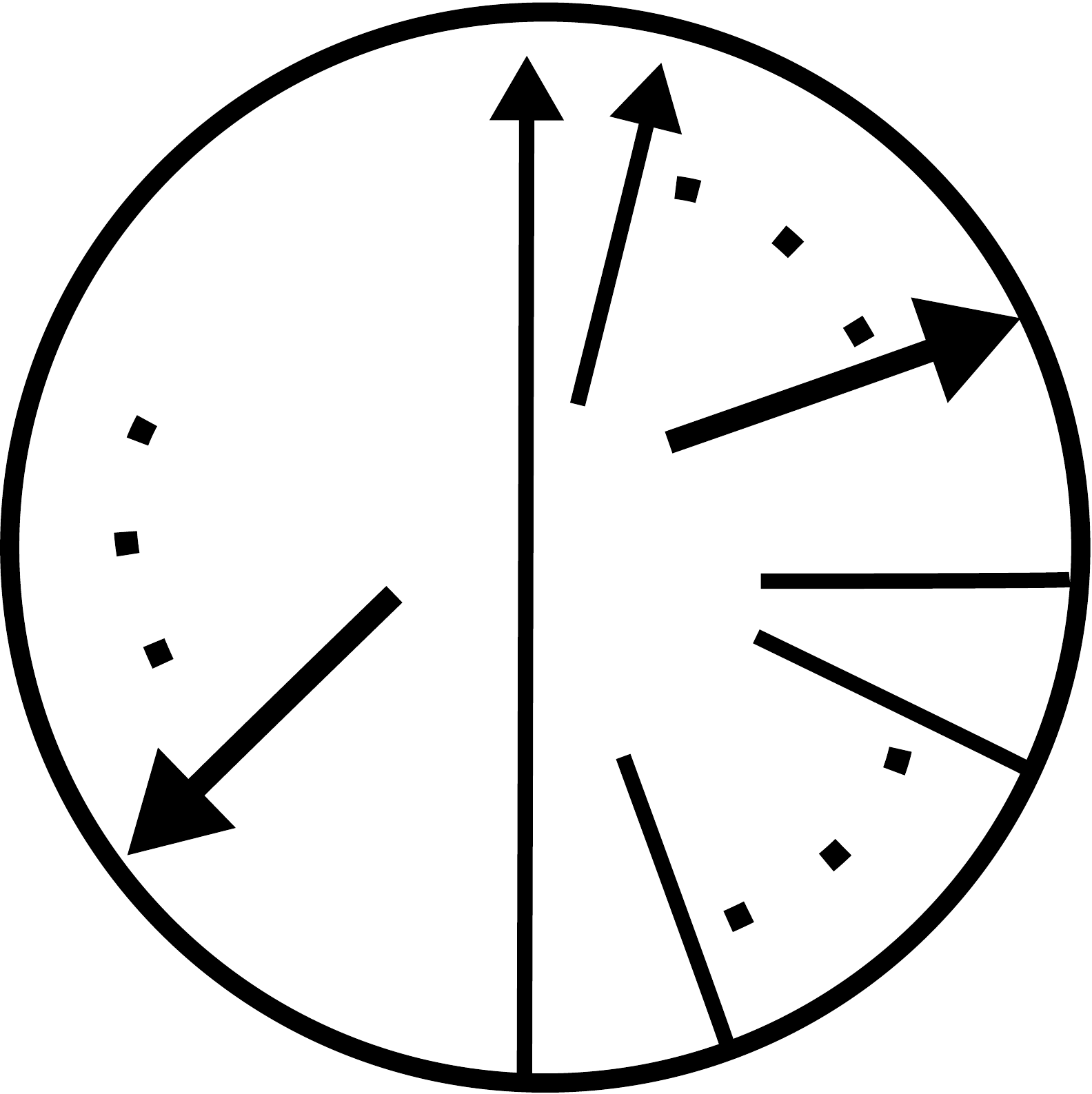} }
\put(8.2,5.5){\includegraphics[scale=.12] {fdsarrow.pdf} }
\put(-0.5,1){\includegraphics[scale=.18] {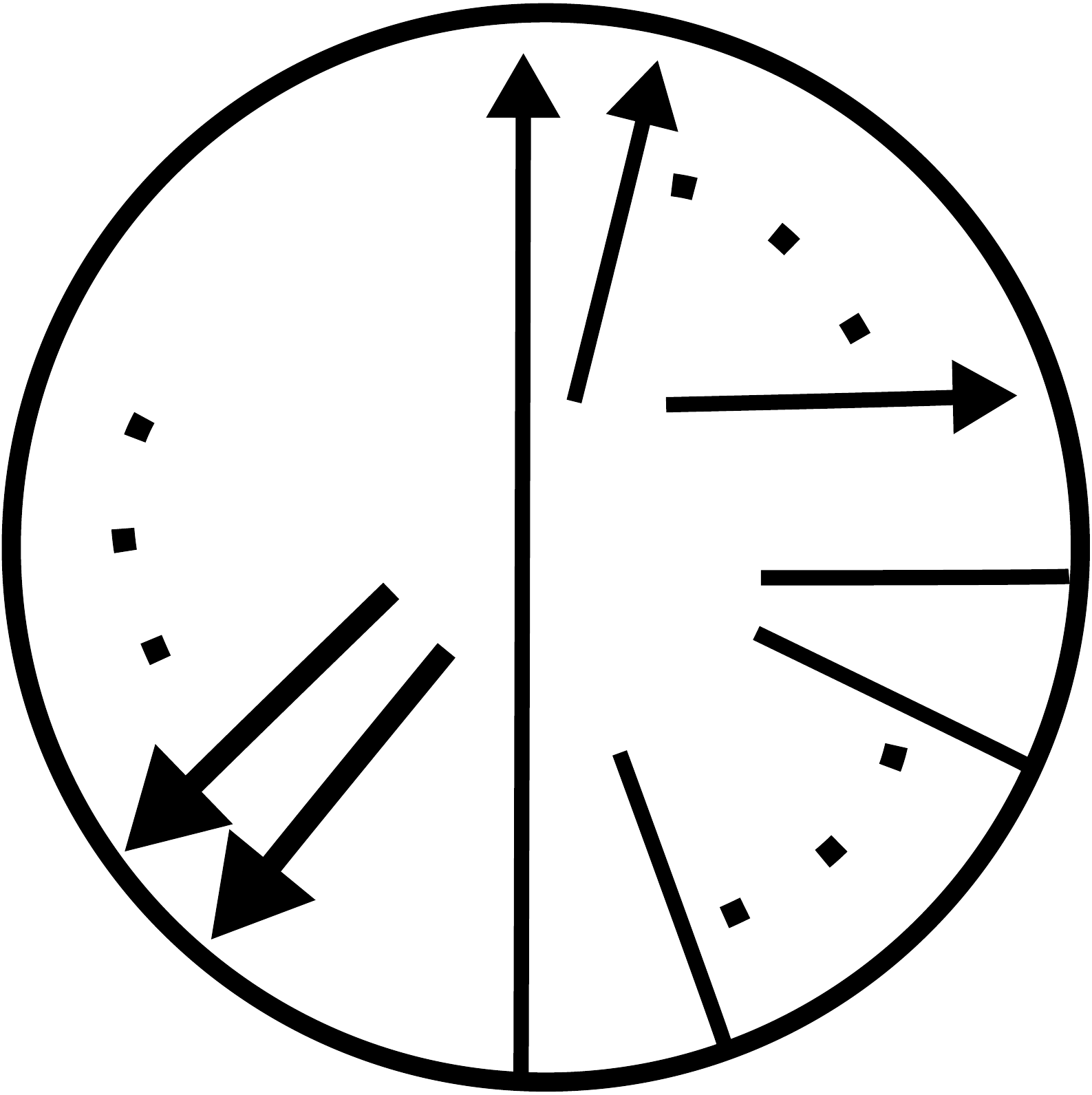} }
\put(2.6,2.3){\includegraphics[scale=.12] {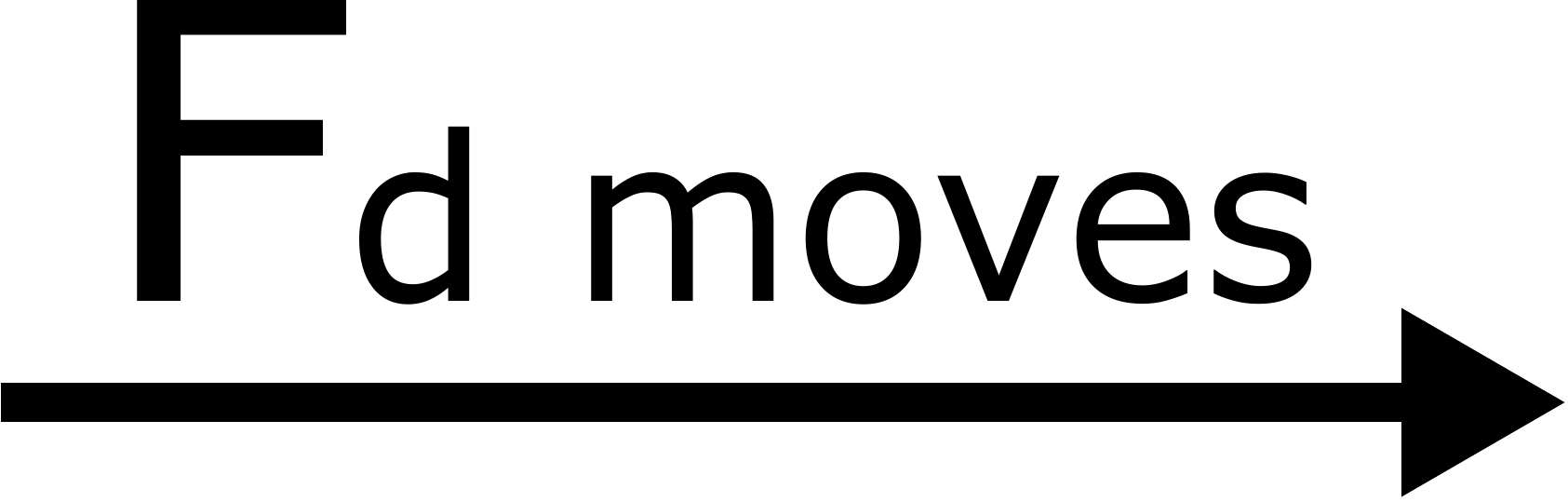} }
\put(5,1){\includegraphics[scale=.18] {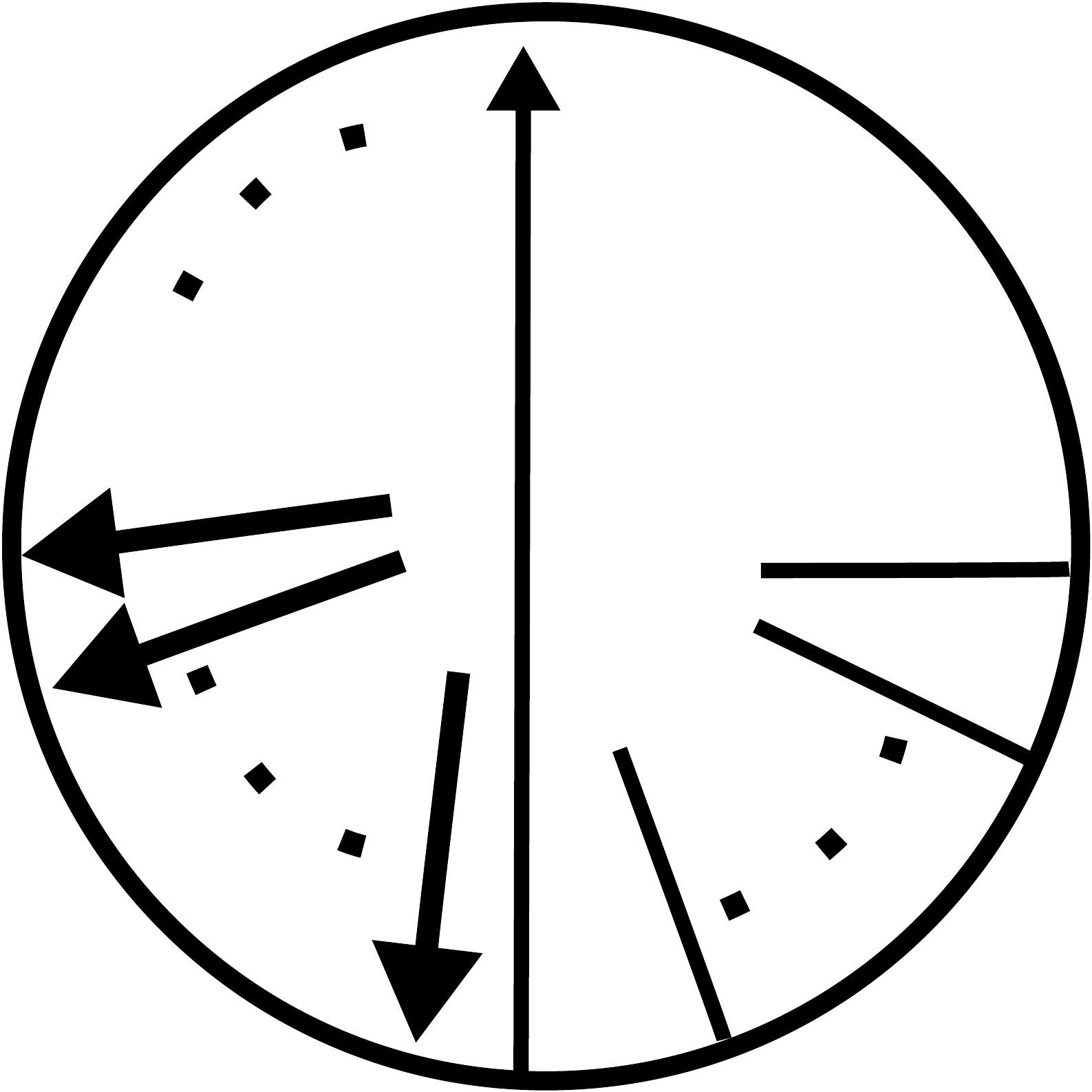} }
\end{picture}}
  \caption{Sequences of forbidden detour moves ($F_d$-moves)}
  \label{SeqMoves}
\end{figure}
Then, since $a>0$ and $b>0$, we see that $2\sqrt{ab} \leq a+b \leq c-1$, and so $4ab \leq (c-1)^{2}$. 
Then, we get
\begin{align*}
a+b+ab \leq (c-1) +\frac{(c-1)^{2}}{4} . 
\end{align*}
When $a$ is equal to $b$, the equality holds.
Let $a_{c} = \lfloor \frac{(c-1)^{2}}{4} \rfloor$. 
When $n = 2\ell$ with some $\ell \in \mathbb{N}$, we have the following.
\begin{align*}
\sum^{n}_{c=1}a_{c} &= \sum^{2\ell}_{c=1}a_{c}\\
&=\sum^{\ell}_{s=1} \{ a_{2s-1} + a_{2s} \} \\
&=\sum^{\ell}_{s=1} \{ \lfloor \frac{(2s-2)^{2}}{4} \rfloor +\lfloor \frac{(2s-1)^{2}}{4} \rfloor \} \\
&=\sum^{\ell}_{s=1}(2s^2 -3s +1 ) \\
&= \frac{1}{6}\ell(4\ell +1)(\ell -1)
\end{align*}
Then, since $\ell = n/2$, 
we get the following.
$$
\sum^{n}_{c=1}\{ \lfloor \frac{(c-1)^{2}}{4} \rfloor + c-1 \} 
=\frac{1}{24}n(2n^2 +9n -14)
$$
On the other hand, when $n =2\ell -1$ with some $\ell \in \mathbb{N}$, we have the following.
\begin{align*}
\sum^{n}_{c=1}a_{c} &= \sum^{2\ell-1}_{c=1}a_{c}\\
&=\sum^{2\ell}_{c=1}a_{c} - a_{2\ell} \\
&= \frac{1}{6}\ell(4\ell +1)(\ell -1) -(\ell^2 -\ell)\\
&= \frac{1}{6}\ell(\ell -1)(4\ell -5)
\end{align*}
Then, since $\ell = (n+1)/2$, 
we have
the following.
\begin{align*}
&\sum^{n}_{c=1}\{ \lfloor \frac{(c-1)^{2}}{4} \rfloor + c-1 \}\\
&=\frac{1}{24}(n+1)(n-1)(2n-3)+\frac{1}{2}n(n+1)-n\\
&=\frac{1}{24}(n-1)(2n^2 +11n -3)
\end{align*}

Consequently, $D$ can be transformed to the trivial knot diagram 
by using Reidemeister moves, virtual Reidemeister moves, forbidden detour moves, and 
if $D$ has $c$ real crossings, 
the number of necessary forbidden detour moves is at most 
$(c-1)(2c^{2}+11c-3)/24$ if $c$ is odd and $c(2c^{2}+9c-14)/24$ if $c$ is even.
\end{proof}

%

%


\section{Lower bound of forbidden detour number}

Next, we consider the lower bound of forbidden detour number of a virtual knot. 
In this section, we estimate it using by using an invariant, called the affine index polynomial. 
In fact, for the forbidden move, Sakurai showed in \cite{SA} the following; 
Let $K$ and $K'$ be two virtual knots which can be transformed into each other by a single forbidden move. 
Then 
\[
P_{K}-P_{K'} = (t-1)(\pm t^{ \ell }\pm t^{m})
\]
holds for some integer $\ell$ and \textit{m}, where $P_K$ denotes affine index polynomial. 
By imitating the argument in \cite{SA}, we have the following.
\begin{theo}\label{AIP}
Let $K$ and $K'$ be two virtual knots which can be transformed into each other by a single forbidden detour move. Then we have
\[
P_{K}-P_{K'} = (t-1)(\pm t^{ \ell }\mp t^{m})
\]
for some integer $\ell$ and \textit{m}, where $P_K$ denotes affine index polynomial.
\end{theo}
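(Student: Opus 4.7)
The plan is to follow Sakurai's computation \cite{SA} for the forbidden move and adapt the local analysis to the forbidden detour move. First, I recall that the affine index polynomial may be written as $P_K(t) = \sum_c \mathrm{sgn}(c)(t^{W(c)} - 1)$, where $c$ ranges over the real crossings of a diagram of $K$ and $W(c)$ is the integer weight at $c$ computed from the Cheng labeling on the semi-arcs. Because a forbidden detour move is local and, as described in Figure~\ref{FdonGD}, affects only the two arrows whose endpoints are swapped, the only crossings whose signs or weights can change are the two crossings $c_1, c_2$ associated with those two arrows; all other terms in the sums defining $P_K$ and $P_{K'}$ agree.

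Next, I would carry out the local computation on the Gauss diagram. The forbidden detour move exchanges the head of one arrow with the tail of an adjacent arrow. Through the rules defining the Cheng labels, this exchange shifts the labels on the affected arcs by $\pm 1$, and I would read off the weights $W(c_1), W(c_2)$ before and after the move directly from these shifts. The desired form then follows from identities of the type $t^{W+1} - t^{W} = (t-1)t^W$, so that the contributions of $c_1$ and $c_2$ to $P_K - P_{K'}$ each factor as $\pm(t-1)t^{w}$ for suitable integers $w$.

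The observation that distinguishes this case from Sakurai's is the sign relation between the two contributions. When two heads (or two tails) of adjacent arrows are swapped, as in a forbidden move, the two induced label shifts combine in a way that allows either matching or opposite signs between the two contributions, and Sakurai's conclusion $(\pm t^\ell \pm t^m)$ accommodates both. When instead a head is swapped with a tail, as in the forbidden detour, the induced shifts at $c_1$ and $c_2$ are forced to be of opposite sign because the head and tail of an arrow contribute to Cheng's labels with opposite signs, so the two contributions necessarily combine with opposite signs and the factored form becomes $(\pm t^\ell \mp t^m)$. I expect the main obstacle to be the careful sign bookkeeping: one must verify, across all orientation and sign configurations of the two arrows involved, that the head-tail asymmetry of the forbidden detour move is exactly what enforces the opposite-sign conclusion.
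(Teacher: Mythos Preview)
Your approach is correct and essentially the same as the paper's: both localize the difference $P_K-P_{K'}$ to the two affected arrows, compute that each index shifts by $\pm 1$ (the paper records $i(\gamma_1')=i(\gamma_1)+\varepsilon(\gamma_2)$ and $i(\gamma_2')=i(\gamma_2)-\varepsilon(\gamma_1)$), factor out $(t-1)$, and then check the four sign configurations of $(\varepsilon(\gamma_1),\varepsilon(\gamma_2))$ to see that the two monomials always appear with opposite signs. The only cosmetic difference is that you phrase the weights via Cheng labelings on semi-arcs while the paper uses Sakurai's equivalent index $i(\gamma)$ on Gauss diagrams; your heuristic that the head--tail asymmetry forces the opposite signs is exactly what the paper's pair of index-shift formulas (one $+\varepsilon$, one $-\varepsilon$) encodes.
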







To prove this, we recall some definitions about the affine index polynomial used in \cite{SA}.

First, we define virtual knot invariants by indexes of arrows for a Gauss diagram. 
Let $G$ be a Gauss diagram of a virtual knot $K$, and $\gamma = \overrightarrow{PQ}$ an arrow oriented from $P$ to $Q$ with sign $\varepsilon (\gamma)$ in $G$. 
We give the signs to the endpoints $P$ and $Q$, denoted by $\varepsilon (P)$ and $\varepsilon (Q)$, respectively, such that $\varepsilon (P) = -\varepsilon (\gamma)$ and $\varepsilon (Q) = \varepsilon (\gamma)$. 
For an arrow $\gamma = \overrightarrow{PQ}$ in a Gauss diagram $G$, the \textit{specified arc} of $\gamma$ is the arc $\alpha$ in the outer circle $\mathbb{S}^1$ with endpoints $P$ and $Q$ oriented from $P$ to $Q$ with respect to the orientation of $\mathbb{S}^1$. 
The \textit{index} of $\gamma$ is the sum of the signs of all the endpoints of arrows on $\alpha$ other than $P$ and $Q$, and denoted by $i(\gamma)$. 

Then the \textit{n-writhe} $J_{n}(K)$ of of a virtual knot $K$ is defined as 
\[
J_{n}(K) = \sum_{i(\gamma) =n} \varepsilon (\gamma) \quad(n \neq 0)
\]
and, we define the \textit{affine index polynomial} $P_K$ of \textit{K} as 
\[
P_{K} = \sum_{n \in \mathbb{Z}} J_{n}(K)(t^{-n}-1) \;.
\]

We remark that this is different from the original definition by Kauffman in \cite{KFM}. 
However Sakurai showed in \cite[Proposition $3.2$]{SA} that this gives an alternative definition of the affine index polynomial.

\begin{proof}[Proof of Theorem~\ref{AIP}]
Suppose that virtual knots $K$ and $K'$ are represented by Gauss diagrams \textit{G} and $G'$ respectively. 
There are two cases, Case (I) and (I\hspace{-1pt}I) of Figure~\ref{ind}, for the change of $G$ and $G'$. 
We here only consider  the case (I) since the other case can be treated similarly. 

Let $\gamma_{i}$ be arrows $\gamma_{i}'$ ($i = 1$, $2$) of \textit{G} and $G'$ are the two arrows in the part where a forbidden detour move is applied. For arrows $\gamma_{i}$ and $\gamma_{i}'$, by Figure~\ref{ind}, we have
\begin{align*}
&i(\gamma_{1}') = i(\gamma_{1}) + \varepsilon(\gamma_{2}) \\
&i(\gamma_{2}') = i(\gamma_{2}) - \varepsilon(\gamma_{1}) 
\end{align*}
where $\varepsilon(\gamma_{i}) =\varepsilon(\gamma_{i}') $. 
\begin{figure}[H]
  {\unitlength=1cm
  \begin{picture}(10,4.5)
\put(-2,1){\includegraphics[scale=.12] {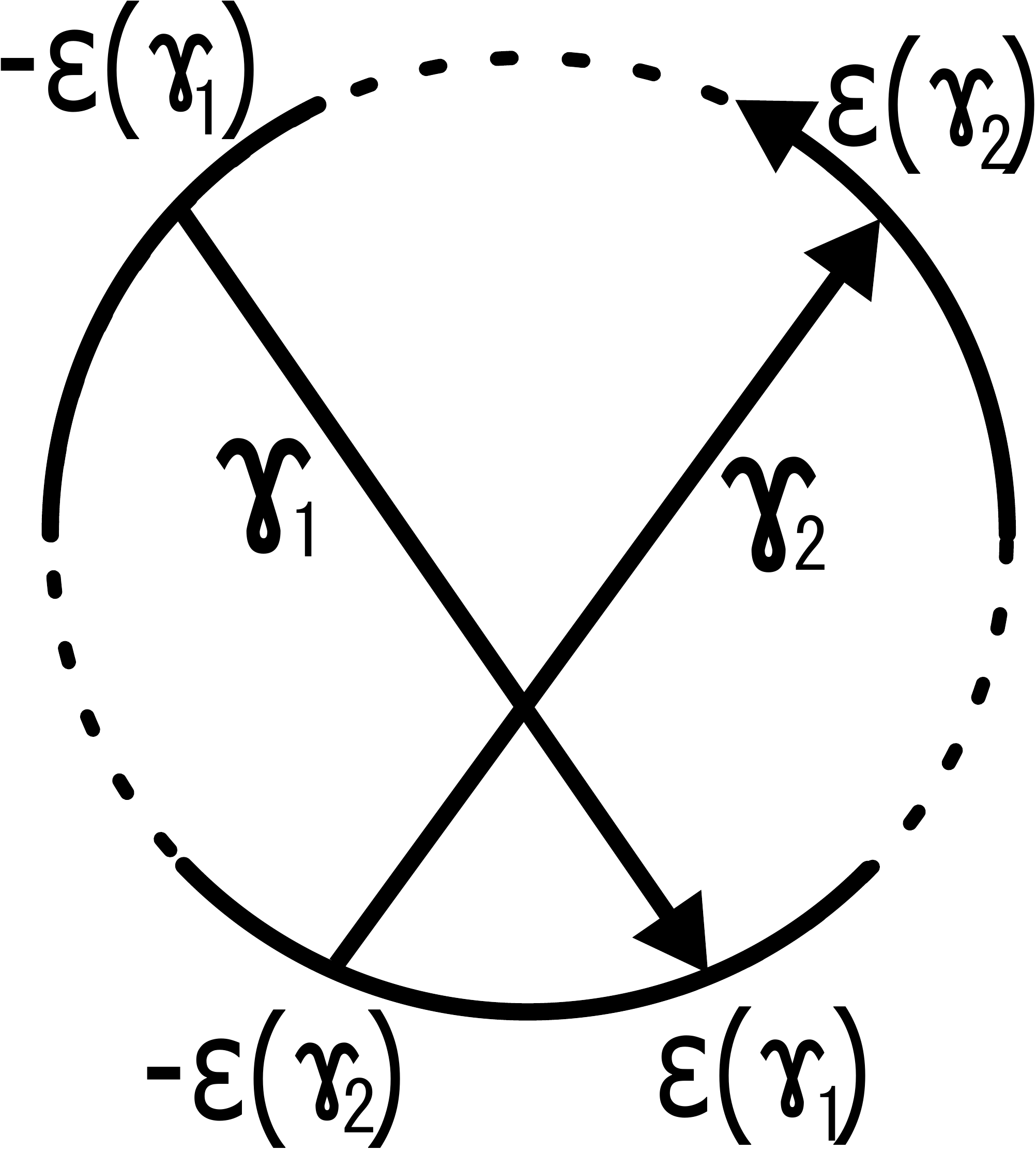} }
\put(0.3,2.8){\includegraphics[scale=.1] {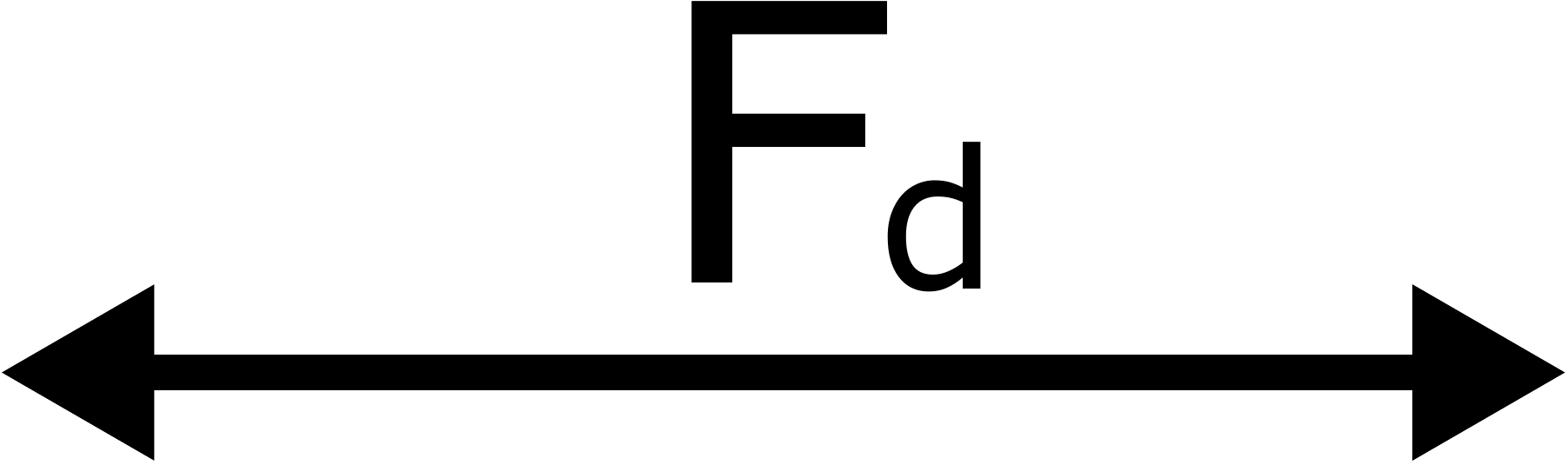} }
\put(2,1){\includegraphics[scale=.12] {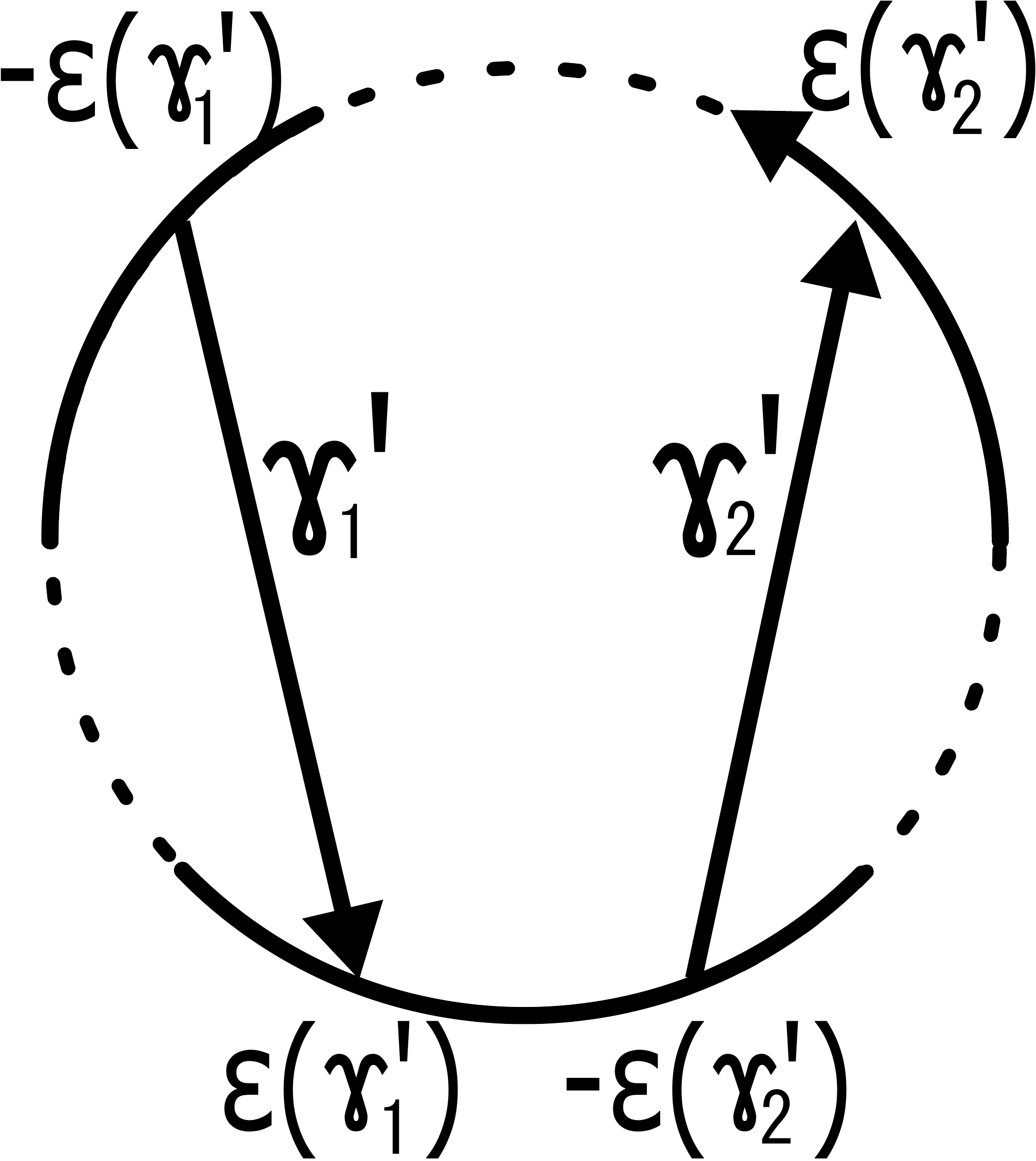} }
\put(1,0.5){(I)}
\put(6,1){\includegraphics[scale=.12] {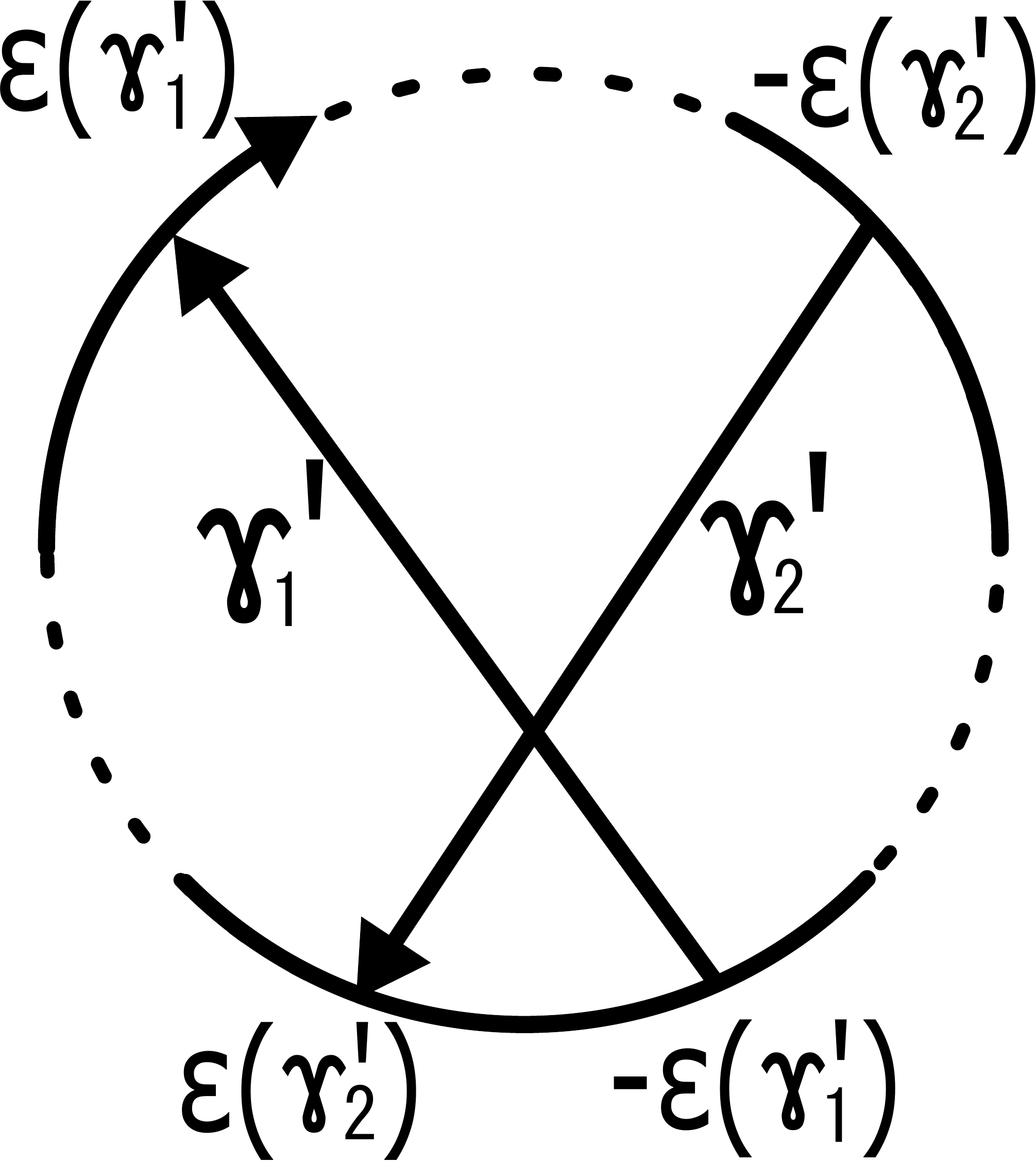} }
\put(8.3,2.8){\includegraphics[scale=.1] {fdwarrow.pdf} }
\put(10,1){\includegraphics[scale=.12] {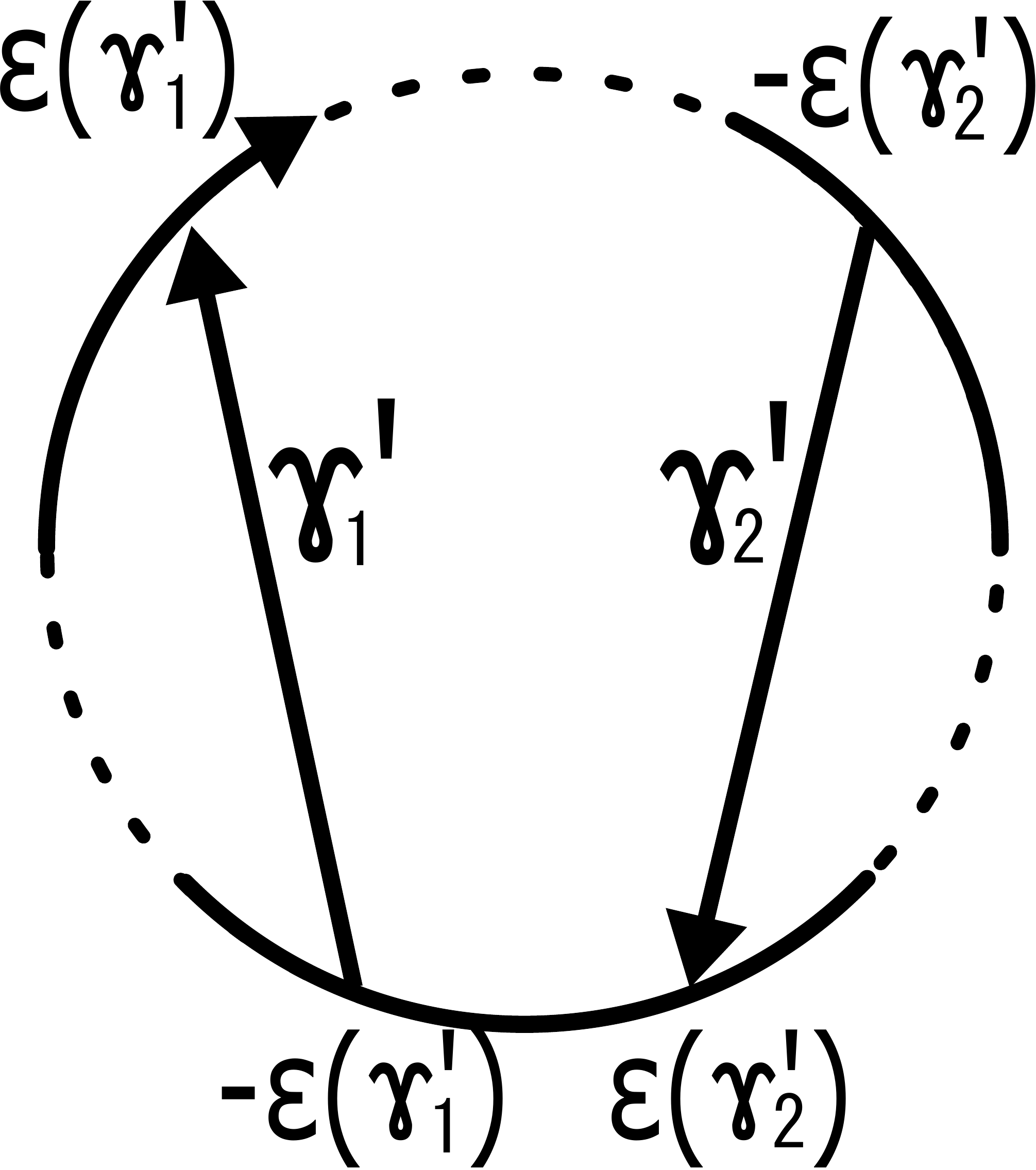} }
\put(9,0.5){(II)}
\end{picture}}
  \caption{}
  \label{ind}
\end{figure}
Therefore, we have the following.
\begin{align*}
P_{K}-P_{K'} &= \varepsilon(\gamma_{1})(t^{-i(\gamma_{1})}-1) + \varepsilon(\gamma_{2})(t^{-i(\gamma_{2})}-1)\\
&\quad -\varepsilon(\gamma_{1}')(t^{-i(\gamma_{1}')}-1) -\varepsilon(\gamma_{2}')(t^{-i(\gamma_{2}')}-1)\\
&= \varepsilon(\gamma_{1})t^{-i(\gamma_{1})} + \varepsilon(\gamma_{1})t^{-i(\gamma_{2})} -\varepsilon(\gamma_{1}')t^{-i(\gamma_{1}')} -\varepsilon(\gamma_{2}')t^{-i(\gamma_{2}')}\\
&\quad -\varepsilon(\gamma_{1}) -\varepsilon(\gamma_{2}) +\varepsilon(\gamma_{1}') +\varepsilon(\gamma_{2}')\\
&= \varepsilon(\gamma_{1})t^{-i(\gamma_{1})} + \varepsilon(\gamma_{1})t^{-i(\gamma_{2})} -\varepsilon(\gamma_{1})t^{-i(\gamma_{1})-\varepsilon(\gamma_2)} -\varepsilon(\gamma_{2})t^{-i(\gamma_{2})+\varepsilon(\gamma_{1})}\\
&=\varepsilon(\gamma_{1})t^{-i(\gamma_{1})}(1-t^{-\varepsilon(\gamma_{2})}) + \varepsilon(\gamma_{2})t^{-i(\gamma_{2})}(1-t^{\varepsilon(\gamma_{1})})\\
&= \left \{
\begin{array}{lll}
(t-1)(t^{-i(\gamma_{1})-1}-t^{-i(\gamma_{2})}) \quad &(\varepsilon(\gamma_{i}) = 1)\\
(t-1)(-t^{-i(\gamma_{1})}+t^{-i(\gamma_{2})})\quad &(\varepsilon(\gamma_{1}) = 1, \varepsilon(\gamma_{2}) = -1)\\
(t-1)(-t^{-i(\gamma_{1})-1}+t^{-i(\gamma_{2})-1})\quad &(\varepsilon(\gamma_{1}) = -1, \varepsilon(\gamma_{2}) = 1)\\
(t-1)(t^{-i(\gamma_{1})}-t^{-i(\gamma_{2})-1})\quad &(\varepsilon(\gamma_{i}) = -1)
\end{array}
\right.
\end{align*}
\end{proof}

\begin{proof}[Proof of Theorem~\ref{LBFd}]
Let $K$ be a virtual knot with a virtual knot diagram $D$ which can be transformed into the trivial knot diagram $O$ by using forbidden detour moves $s$ times. 
That is, we suppose that there exists a sequence of virtual knot diagrams $D_0, D_2, \cdots, D_s$ such that $D=D_0$, $D_i$ is obtained from $D_{i-1}$ by single forbidden detour move ($1 \le i \le s$), and $D_s = O$. 
We denote by $K_i$ the virtual knot represented by $D_i$ ($1 \le i \le s$). 
Suppose that the affine index polynomial of $K_{i}$ is expressed as $P_{t}(K_{i}) = (t-1)\sum a^{i}_{n}t^{n}$.
By Theorem~\ref{AIP}, we get the following.
\begin{align*}
P_{t}(K_{1})&=P_{t}(K) + (t-1)(\pm t^{\ell} \mp t^{m}) \\
&= (t-1)\sum a^{0}_{n}t^{n}  + (t-1)(\pm t^{\ell} \mp t^{m}) \\
&= (t-1)\cdot(\cdots + (a^{0}_{\ell} \pm 1)t^{\ell} + \cdots +(a^{0}_{m} \mp 1)t^{m} + \cdots)
\end{align*}
The coefficients of $a_{\ell}^{1}$ and $a_{\ell}^{0}$ satisfy the next ($0 \leq \ell \leq s$).
\begin{align*}
a_{\ell}^{1} &=a_{\ell}^{0} \pm1 \\
|a_{\ell}^{1}|&=|a_{\ell}^{0} \pm1| \geq |a_{\ell}^{0}| -1 
\end{align*}
Then we have the following.
\begin{align*}
\sum |a_{n}^{1}| &\geq (\sum|a_{n}^{0}|)-2 \\
\sum |a_{n}^{2}| &\geq (\sum|a_{n}^{1}|)-2 \\
&\vdots \\
\sum |a_{n}^{s}| &\geq (\sum|a_{n}^{s-1}|)-2 
\end{align*}
We conclude the following.
\begin{align*}
0 = \sum |a_{n}^{s}| &\geq (\sum |a_{n}^{0}|)-2s \\
2s &\geq \sum |a_{n}^{0}| \\ 
s &\geq \displaystyle\frac{\sum |a_{n}^{0}|}{2}
\end{align*} 
\end{proof}

\end{document}